\theoremstyle{plain}                                                           
\newtheorem*{thm}{Theorem}
\newtheorem*{prop}{Proposition}
\theoremstyle{definition}
\newtheorem*{rem}{Remark}
\DeclareMathOperator{\Hom}{Hom}
\DeclareMathOperator{\Gal}{Gal}
\newcommand{\field}[1]{\ensuremath{\mathbf{#1}}}
\newcommand{\Q}{\ensuremath{\field{Q}}}      
\newcommand{\C}{\field C}  
\newcommand{\Z}{\ensuremath{\field{Z}}} 
\newcommand{\Ch}{\ensuremath{\mathrm{Chains}}}
\newcommand{\D}{{D}}
\newcommand{\PaB}{\mathbf{PaB}}
\newcommand{\CPaB}{\widehat\PaB}
\newcommand{\s}{\widetilde\sigma}
\newcommand{\Gm}{\mathbb{G}_m}
\renewcommand{\k}{\mathbf k}
\newcommand{\I}{\mathfrak I}
\title{Minimal models, GT-action and formality of the little disk operad}
\author{Dan Petersen}
\thanks{Supported by the G\"oran Gustafsson foundation for scientific and medical research}
\email{danpete@math.kth.se}
\address{Department of Mathematics \\ KTH Royal Institute of Technology \\ 100 44 Stockholm \\ Sweden}
\begin{document} 
  
 \maketitle

 \begin{abstract} We give a new proof of formality of the operad of little disks. The proof makes use of an operadic version of a simple formality criterion for commutative differential graded algebras due to Sullivan. We see that formality is a direct consequence of the fact that the Grothendieck--Teichm\"uller group operates on the chain operad of little disks.\end{abstract}

\section*{Introduction}

Let $D_2$ be the topological operad of little disks. It was proven in \cite{tamarkin} that this operad is \emph{formal}: there is a chain of quasi-isomorphisms of dg operads connecting the operad $\Ch(D_2)$ and its homology $H(D_2)$. A different proof, which works for little disks of any dimension,  was given in \cite{operadsmotives}, see also the improvements in \cite{lambrechtsvolic}. 



In this note we give a short proof of formality of $D_2$. We begin by recalling from Sullivan a simple characterization of when a cdga is formal, and explain why this characterization carries over without changes to dg operads. The crucial tool is the notion of a minimal model of a cdga or a dg operad, respectively. Using this one can immediately deduce from the action of $GT$ on $\Ch(D_2)$ and the surjectivity of $GT(\Q) \to \Q^\times$, proven by Drinfel'd, that $D_2$ is a formal operad. Finally we give some motivation for the proof coming from the theory of weights in the cohomology of algebraic varieties.

I am grateful to Johan Alm for patient explanations, keen interest and stimulating conversations.
\section*{Formality of the little disk operad}

Fix a base field $\k$ of characteristic zero. If $V$ is a graded vector space, then we denote by $V^i$ its degree $i$ summand. We call $\phi_q \in \mathrm{GL}(V)$  a \emph{grading automorphism} if it has the form $\phi_q(v) = q^i v$ when $v \in V^i$, where $q \in \k^\times$ is a fixed non-root of unity. In the same way there are grading automorphisms of any graded algebra or any operad in graded vector spaces. 
The following proposition is proven in \cite[Theorem 12.7]{sullivaninfinitesimal}. We recall Sullivan's proof. 

\begin{prop}Let $A$ be a nilpotent commutative differential graded algebra. If a grading automorphism of $H(A)$ lifts to an automorphism of $A$, then $A$ is formal. \end{prop}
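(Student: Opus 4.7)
The strategy is the classical one via minimal Sullivan models. Replace $A$ by a minimal model $\rho\colon (M,d)\to A$ with $M=\Lambda V$ freely generated by a graded vector space $V$ satisfying $dV\subseteq\Lambda^{\ge 2}V$; since formality is a quasi-isomorphism invariant, it suffices to construct a cdga quasi-isomorphism $M\to H(A)$. Using the standard lifting property of the minimal model against the quasi-isomorphism $\rho$, the automorphism $\phi$ of $A$ lifts to a cdga automorphism $\psi\colon M\to M$ with $H(\psi)=\phi_q$, and $\psi$ acts canonically on the indecomposables $V\cong M^+/(M^+)^2$.

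The key intermediate goal is to endow $M$ with a weight grading for which the cohomology is pure. I would argue inductively along the Sullivan (degree) filtration of $V$ that one can choose a basis of $V$ on which $\psi$ acts semisimply, with eigenvalues lying in $\{q^n : n\in \Z\}$. The inductive step uses that the differential sends new generators into polynomial expressions in previously-constructed generators, whose $\psi$-eigenvalues are already integer powers of $q$; because $q$ is not a root of unity, distinct integer powers of $q$ are distinct, so the relevant generalized eigenspaces can be refined into eigenspaces. Declaring $v$ to have weight $n$ when $\psi v = q^n v$ and extending multiplicatively gives $M$ a bigrading by cohomological degree and weight, preserved by $d$, with $\psi$ acting as $q^w$ on weight $w$. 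Since $H(\psi)=\phi_q$, the cohomology $H^i(M)$ is then concentrated in weight $i$: this is the purity statement.

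Formality is extracted from purity by an explicit construction: define a map $\pi\colon M\to H(A)$ on generators by sending a pure cocycle generator (weight equal to degree, with $dv=0$) to its cohomology class in $H^{|v|}(A)$, and every other generator to $0$; extend multiplicatively. Chain-map compatibility is a weight-and-degree bookkeeping: for any generator $v$, a monomial appearing in $dv$ has total weight equal to that of $v$ but total degree one larger, and any monomial whose factors are all pure cocycles must have total weight equal to total degree, so the arithmetic rules out most monomials from contributing. The remaining ``good'' monomials assemble into a cocycle whose class in $H(A)$ is forced to vanish by purity and the fact that $[dv]=0$ in $H(M)=H(A)$. Induction on Sullivan degree shows that the resulting map is a quasi-isomorphism.

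The most delicate step I anticipate is constructing the weight decomposition itself: the inductive refinement of $\psi$ on $V$ into a semisimple operator whose eigenvalues are integer powers of $q$ must be carried out across the Sullivan filtration without eigenvalue collisions between old and new generators. Once the weight structure is in place, the passage from purity to formality is essentially a splitting argument, but the verification that the constructed map is simultaneously a chain map and multiplicative requires careful bookkeeping in both the cohomological degree and the weight.
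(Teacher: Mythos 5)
Your proposal is correct and follows essentially the same route as the paper's (Sullivan's) proof: lift the grading automorphism to the minimal model, use the inductive construction to make it act semisimply with eigenvalues that are integer powers of $q$, and use purity of cohomology to project onto the weight-equals-degree part. The paper packages your ``kill the impure generators'' map as the quotient $M \to M/(\I, d\I)$, where $\I$ is the ideal of elements whose weight strictly exceeds their degree; it also records explicitly that every eigenvalue on $M^i$ is $q^j$ with $j \geq i$, which is the inequality that makes the chain-map bookkeeping in your final step immediate.
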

\begin{proof}Denote by $\sigma$ a lift to $A$ of the grading automorphism $\phi_q$ of $H(A)$. Let $p \colon M \stackrel \sim \to A$ be a minimal model. By comparing $p$ and $\sigma \circ p$, the uniqueness of the minimal model implies that $\sigma$ induces an automorphism $\s \colon M \to M$, well defined up to homotopy. 

From the explicit inductive construction of the minimal model one can see that the eigenvalues of $\s$ on $M^i$ are products of eigenvalues on $H^{i_n}(A)$, with $\sum_n i_n \geq i$. Thus all eigenvalues of $\s$ on $M^i$ have the form $q^j$, where $j \geq i$. Define $M_j$ as the subspace of $M$ where $\s$ acts as multiplication by $q^j$. Define 
\[ {\I} = \bigoplus_{j>i} M^i_j \qquad \text{and}\qquad S = \bigoplus_i M_i^i. \]
By the preceding paragraph we see that $M = \I \oplus S$, $dS = 0$, and that $\I$ is an ideal. Hence 
\[ M \to M/ (\I,d \I) = S/(S \cap d \I) = H(M)\]
makes sense and is easily seen to be a quasi-isomorphism. \end{proof}

We now assume that $P$ is a dg operad with $H(P)(0)=0$ and $H(P)(1) \cong \k$. This implies that $P$ has a minimal model, well defined up to homotopy, which may be constructed via an explicit inductive construction, see \cite{marklmodels}. In the next proposition we assume that $P$ is cohomologically graded, but the result is of course valid also in the homological case. That Sullivan's result is true for operads is also proven in \cite[Corollary 5.2.2]{formaloperads}. They, like Sullivan, use this result for proving that formality descends to a smaller ground field.

\begin{prop}If a grading automorphism of $H(P)$ lifts to $P$, then $P$ is formal. \end{prop}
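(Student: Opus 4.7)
The plan is to follow Sullivan's proof from the previous proposition nearly verbatim, replacing cdga's by dg operads throughout, and checking that each step still makes sense in the operadic setting. The hypothesis gives a lift $\sigma \colon P \to P$ of the grading automorphism $\phi_q$ on $H(P)$. By the assumption on $H(P)(0)$ and $H(P)(1)$, the operad $P$ admits a minimal model $p \colon M \stackrel{\sim}{\to} P$ by Markl's construction. The uniqueness of the minimal model up to homotopy, applied to the two quasi-isomorphisms $p$ and $\sigma \circ p$, produces an automorphism $\s \colon M \to M$ of dg operads, well defined up to homotopy, lifting $\phi_q$ on $H(M) = H(P)$.

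Next I would analyze the eigenvalues of $\s$. In Markl's explicit inductive construction of the minimal model, $M$ is built up freely by successively adjoining generators that either map to cocycles in $P$ or kill existing cohomology classes, with the differential of each new generator expressed as a (nested) composition of previously chosen generators coming from $H(P)$. The same bookkeeping as in Sullivan's argument then shows that the eigenvalues of $\s$ on $M(n)^i$ are products of eigenvalues of $\phi_q$ on various $H(m_k)^{i_k}$ with $\sum_k i_k \geq i$. Consequently every eigenvalue of $\s$ on $M(n)^i$ is of the form $q^j$ with $j \geq i$.

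With this in hand, define $M_j \subset M$ to be the generalized eigenspace where $\s$ acts by $q^j$, and set
\[ \I = \bigoplus_{j > i} M_j^i, \qquad S = \bigoplus_i M_i^i. \]
As in the cdga case one has $M = \I \oplus S$ as a graded vector space (or more precisely as a graded $\mathbb{S}$-module in each arity). Since $\s$ is an operad automorphism commuting with the differential, the subspace $\I$ is an operadic ideal and $d$ preserves the eigenvalue decomposition, which forces $dS = 0$ for degree reasons (an element of $S$ in degree $i$ has differential in degree $i+1$ with eigenvalue $q^i$, but only eigenvalues $q^j$ with $j \geq i+1$ occur there). Thus the projection
\[ M \longrightarrow M/(\I + d\I) \cong S/(S \cap d\I) \]
is a quasi-isomorphism from $M$ to a dg operad with zero differential, and one checks immediately that the target is isomorphic to $H(M) = H(P)$. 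Chaining this with the minimal model map $p$ exhibits the desired formality zig-zag.

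The main obstacle is the second step: making sure that the inductive structure of Markl's minimal model really does control the eigenvalues of the induced automorphism $\s$ in the same way Sullivan's construction does for cdga's. Granting this (which is where the citation to \cite{marklmodels} does the work), the rest of the argument is a direct transcription of Sullivan's, since the algebraic manipulations involve only the notions of ideal, quotient, and quasi-isomorphism, all of which behave well for dg operads.
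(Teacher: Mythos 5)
Your proposal is correct and follows exactly the route the paper takes: the paper's own proof simply says to repeat Sullivan's argument word for word with $A$ replaced by $P$, minimal models and ideals understood operadically, which is precisely what you have done (with the eigenvalue bookkeeping for Markl's inductive construction spelled out a bit more explicitly).
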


\begin{proof}Repeat word for word the preceding proof, with the substitution $A \leadsto P$ and the tacit understanding that `minimal model' now refers to the operadic minimal model, and `ideal' refers to operadic ideal. \end{proof}


We can now prove formality of the little disk operad $D_2$. We first recall very briefly the Grothendieck--Teichm\"uller group $GT$ and its action on $\Ch(D_2)$. See \cite{barnatan,tamarkin} or the expositions in \cite{merkulovgt,fressebook} for more details. 

There is an operad in groupoids $\PaB$, such that the objects of $\PaB(n)$ are parenthesized permutations of $\{1,\ldots,n\}$, and morphisms are braids on $n$ strands whose start and end must have the same label. There is a weak equivalence between $\PaB$ and the operad of fundamental groupoids of $\D_2$. Since moreover $D_2(n)$ is a $K(\pi,1)$ space for all $n$, we have an isomorphism $\Ch(D_2) \cong \Ch(\mathrm{Nerve}(\PaB)).$ If we take chains with $\k$-coefficients, then we may as well replace $\PaB$ with its $\k$-pro-unipotent completion $\CPaB$, as in rational homotopy theory. 
The completion is useful because whereas $\PaB$ itself does not have many automorphisms, it turns out that $\CPaB$ has a quite large automorphism group. 

\begin{figure}[h]
 \includegraphics[width=0.14\textwidth]{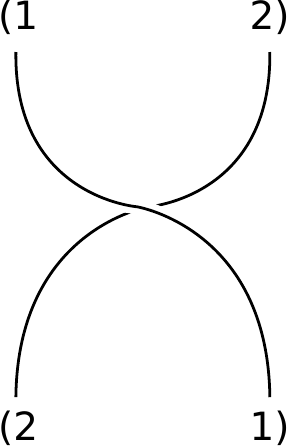} \hspace{0.15\textwidth}
 \includegraphics[width=0.25\textwidth]{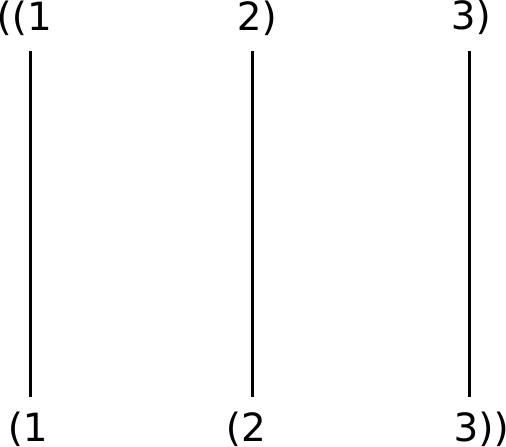}\hspace{0.15\textwidth}
 \includegraphics[width=0.14\textwidth]{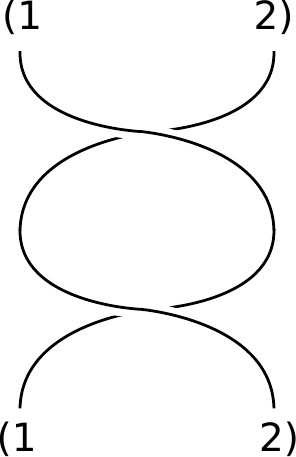}
 
 \caption{The braiding $\tau$, the associator $\phi$, and the twist $\tau^2$.}
 \label{tau}
 \end{figure}
The operad $\PaB$ is generated by a morphism $\tau$ in $\PaB(2)$ (the \emph{braiding}) and $\phi$ in  $\PaB(3)$ (the \emph{associator}), see Figure \ref{tau}, and an automorphism of $\CPaB$ is determined by the images of $\tau$ and $\phi$. The image of $\tau$ can be described by a scalar $\lambda \in \k^\times$: if we abusively denote by $\tau^2$ the `twist' in Figure \ref{tau}, then we must have $\tau^2 \mapsto (\tau^2)^\lambda$ for some such parameter, and $\lambda$ determines the image of $\tau$. The exponentiation makes sense because $\Hom_{\CPaB(2)}((12),(12))$ is a pro-unipotent group. Describing the image of $\phi$ is more complicated, since we need to describe an element of a completion of a \emph{three}-strand braid group. One finds that the image of $\phi$ can be described by an element $f$ in the pro-unipotent completion of the free group $F_2$, and that $f$ must satisfy a certain list of equations which we do not write down. One can then define an algebraic group $GT$ consisting of all such pairs $(\lambda, f)$, with group operation corresponding to compositions of automorphisms. This is the Grothendieck--Teichm\"uller group. By construction it acts on $\CPaB$ and hence on $\Ch(D_2)$.

\begin{thm}The operad $D_2$ of little disks is formal over $\Q$. \end{thm}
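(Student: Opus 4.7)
The plan is to verify the hypothesis of the preceding Proposition for $P = \Ch(D_2)$: I need to exhibit, for some $\lambda \in \Q^\times$ that is not a root of unity, an automorphism of $\Ch(D_2)$ whose induced action on $H(D_2)$ is the grading automorphism $\phi_\lambda$. Once this is done, operadic Sullivan applies and we are finished.

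First, I would identify the homology. The operad $H(D_2)$ is the Gerstenhaber operad, which as an operad is generated in arity $2$ by the commutative product (degree $0$) and the class $[\tau] \in H_1(D_2(2))$ of the braiding (degree $1$, or $-1$ in cohomological convention). Consequently any operadic automorphism of $H(D_2)$ is determined by the scalar $\mu \in \k^\times$ by which it multiplies $[\tau]$, and by multiplicativity of the operadic composition it acts on the degree $i$ summand of $H(D_2)$ as multiplication by $\mu^i$. In other words, every operadic automorphism of $H(D_2)$ is automatically a grading automorphism.

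Next I would read off from the description of $GT$ recalled above that an element $(\lambda, f) \in GT$ sends $\tau^2$ to $(\tau^2)^\lambda$ in $\CPaB(2)$, and therefore its induced action on the abelianization $H_1(D_2(2))$ is multiplication by $\lambda$. Combined with the previous paragraph, the action of $(\lambda, f)$ on $H(D_2)$ is exactly the grading automorphism $\phi_\lambda$.

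Finally I would invoke Drinfeld's surjectivity theorem for $GT(\Q) \to \Q^\times$: pick any $\lambda \in \Q^\times$ that is not a root of unity (say $\lambda = 2$), lift it to some $(\lambda, f) \in GT(\Q)$, and let $\sigma$ be the corresponding automorphism of $\Ch(D_2)$ provided by the $GT$-action on $\CPaB$. By construction $\sigma$ lifts $\phi_\lambda$ on $H(D_2)$, so the operadic formality criterion yields formality of $\Ch(D_2)$ over $\Q$. The only step where one has to be careful is the bookkeeping in the second paragraph — that $GT$ truly acts on $H(D_2)$ as a grading automorphism — but this reduces cleanly to the fact that the Gerstenhaber operad is generated in arity $2$.
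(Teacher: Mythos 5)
Your proposal is correct and is essentially the paper's own proof: both arguments use that the Gerstenhaber operad is generated in arity $2$ (so the induced action on homology depends only on $\lambda$, not $f$), that $\tau^2 \mapsto (\tau^2)^\lambda$ gives multiplication by $\lambda$ on $H_1(D_2(2))$, and Drinfel'd's surjectivity of $GT(\Q) \to \Q^\times$ to produce a lift of a grading automorphism with $\lambda$ not a root of unity. The only nitpick is your claim that \emph{every} operadic automorphism of $H(D_2)$ is a grading automorphism --- one could in principle also rescale the product in $H_0(D_2(2))$ --- but since any automorphism induced from an automorphism of chains acts as the identity on $H_0$ of a connected space (a point the paper checks explicitly), this does not affect the argument.
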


\begin{proof}Consider the map 
$ GT \to \Gm$ which maps a pair $(\lambda, f)$ to $\lambda$. We claim that this sends an automorphism of $\Ch(D_2)$ to the induced automorphism on homology, where $\Gm$ acts on homology via the grading action. The easiest way to see this is to use that the homology operad $H(D_2)$ (which is the operad of Gerstenhaber algebras) is generated in arity $2$. In particular the automorphism induced on homology by $(\lambda,f)$ can not depend on $f$, since $f$ only affects $\PaB(n)$ for $n \geq 3$. The space $D_2(2)$ is homotopic to a circle and its fundamental group is generated by the twist $\tau^2$. The map $\tau^2 \mapsto (\tau^2)^\lambda$ induces the identity on on $H_0(D_2(2))$ and multiplication by $\lambda$ on $H_1(D_2(2))$, which proves the claim. Finally,  $GT(\Q) \to \Q^\times$ is surjective (in fact even split), as proven in \cite[Section 5]{drinfeldGT}.
By the formality criterion established earlier, this shows that $D_2$ is formal. \end{proof}

\begin{rem}It is a well established principle that a formality isomorphism for the little disks must in one way or another involve the choice of an associator, see \cite{operadsmotives}. This principle holds true also for our proof: Drinfel'd deduces the surjectivity of $GT(\Q) \to \Q^\times$ from the existence of a rational associator. \end{rem}

\section*{Remarks on weights} Deligne, Griffiths, Morgan and Sullivan \cite{dgms} pro\-ved that compact K\"ahler manifolds are formal. Their proof uses classical Hodge theory and the $dd^c$-lemma. However, in the introduction they explain that they originally conjectured the result for smooth projective varieties by thinking about (at the time conjectural) properties of \'etale cohomology and positive characteristic algebraic geometry. Namely, one expected to be able to give purely algebraic constructions of Massey products in the \'etale cohomology, which should in particular be equivariant with respect to the Frobenius map. But the $n$th Massey product $\mu_n$ decreases cohomological degree by $n-2$, and by the Weil conjectures all eigenvalues of Frobenius on $H^i$ should have absolute value $q^{i/2}$. Thus Frobenius equivariance should force a `uniform' vanishing of $\mu_n$ for all $n > 2$, and we expect the variety to be formal. This is an instance of the philosophy of `weights' in cohomology, see e.g.\ Deligne's 1974 ICM address \cite{deligneicm}. 

A proof of formality along these lines was later obtained by Deligne via the proof of the Weil conjectures \cite[(5.3)]{weil2}: for $X$ a smooth complex projective variety, one may choose a countable subfield $\k$ over which $X$ is defined and use \'etale cohomology to obtain a dg algebra with an action of $\Gal(\overline \k / \k)$ computing $H(X)$, and the Galois action can be used to define a `weight filtration' which implies formality. 

The topological space $D_2(n)$ is homotopy equivalent to the configuration space of $n$ points in the complex plane. This, in turn, is the complex points of the algebraic variety 
\[ F_n = \mathbb A^n \setminus (\text{big diagonal})\]which is defined over $\Z$. This fact, as well as the actions of $\Gal(\overline \Q/\Q)$ on $\Ch(D_2) \otimes \Q_\ell$ for any prime $\ell$ (via the embedding $\Gal(\overline \Q/\Q) \hookrightarrow \widehat{GT}$ constructed in \cite{drinfeldGT} and  \cite{iharaembedding}, where $\widehat{GT}$ denotes the profinite version of the Grothendieck--Teichm\"uller group), can lead one to speculate that the operad $D_2$ is actually (up to homotopy) the base change to $\C$ of some algebro-geometrically defined operad defined over $\Q$ (or perhaps even $\Z$). This was proposed in \cite{morava}. Note though that the spaces $F_n$ do not themselves form an operad in any natural sense. The $\ell$-adic Galois representation on the \'etale cohomology group $H^i_{\text{\'et}}(F_n,\Q_\ell)$ is known: it is a sum of copies of the Tate object $\Q_\ell(-i)$ of weight $2i$, see \cite{kimhyperplane}. This coincides with the Galois action on $H^i(D_2(n)) \otimes \Q_\ell$ defined via $\widehat{GT}$, as one sees from the commutative diagram
\begin{diagram}
\Gal(\overline \Q/\Q) &\rTo & \widehat{GT} & \rTo & \widehat{\Z}^\times \\
 & & \dTo & & \dTo \\
 & & GT(\Q_\ell) & \rTo & \Q_\ell^\times 
\end{diagram}
where the composition in the top row is the cyclotomic character.

We have explained that for smooth projective varieties the yoga of weights predicted vanishing of all Massey products. Something similar happens here. Suppose we did not know that $D_2$ is formal. By a Homotopy Transfer Theorem there is a structure of strong homotopy operad on $H(D_2)$ making it quasi-isomorphic to $\Ch(D_2)$ \cite{granaker}. Just as for $A_\infty$-algebras this structure is encoded by an infinite sequence of higher order multilinear operations $\mu_n$ which in this case raise homological degree by $n-2$. If these operations were compatible with the weights in cohomology, they would all need to vanish for $n >2$ and $D_2$ would be formal. 

In Deligne's formality proof we needed a Galois action to define the weight filtration, and the Galois action was obtained from \'etale cohomology. But here we do not need any algebraic geometry or a realization of Morava's proposal to get a Galois action on $\Ch(D_2)$, since we already know that $GT$ acts on this chain operad. All in all, this suggests strongly that there should exist a proof of formality of $D_2$ using only the fact that $GT$ acts on its operad of chains. The present note is the result of this line of thinking. 

\begin{rem}By reasoning with weights exactly as above, one is led to conjecture that operads of smooth projective varieties are always formal. In fact the main theorem of \cite{formaloperads} is that operads of compact K\"ahler manifolds are formal. Just as in \cite{dgms} their proof uses classical Hodge theory and does not directly involve the theory of weights. \end{rem}

\bibliographystyle{alpha}
\bibliography{../database}

\end{document}